\theoremstyle{plain} 
\newtheorem{theorem}{\indent\sc Theorem}[section]
\newtheorem{lemma}[theorem]{\indent\sc Lemma}
\theoremstyle{definition} 
\newtheorem{remark}[theorem]{\indent\sc Remark}
\begin{document}

\title[ 3D incompressible Navier-Stokes equations with damping ]{Asymptotic behavior of solutions to 3D incompressible Navier-Stokes equations with damping} 

\author[X. Zhao]{Xiaopeng Zhao$^*$} 
\author[H. Meng]{Haichao Meng}


\subjclass[2010]{ 
35B40, 35L70, 35Q72,
}
%
\keywords{ 
Navier-Stokes equations, generalized Navier-Stokes equations, damping, decay rate.
}
\thanks{ 
$^{*}$This paper is supported by the NSFC (grant
No. 11401258), NSF of Jiangsu Province (grant
No. BK20140130) and China Postdoctoral Science Foundation (grant No. 2015M581689
).}
\address{ Xiaopeng Zhao, Haichao Meng \endgraf
School of Science\endgraf
Jiangnan University\endgraf
Wuxi 214122,~~~P. R. China\endgraf
}
\email{X. Zhao: zhaoxiaopeng@jiangnan.edu.cn}


\begin{abstract}
In this paper,  we study the upper   bound of the time decay rate of solutions to the   Navier-Stokes equations and generalized Navier-Stokes equations with damping term $|u|^{\beta-1}u$ ($\beta>1$) in $\mathbb{R}^3$.
\end{abstract}\maketitle

\section{Introduction} \label{wsect1}

We study  the  Cauchy problem for the 3D
  Navier-Stokes equations with damping
  \begin{equation} \label{w1-0}
\left\{ \begin{aligned}
       & \frac{\partial u}{\partial t}+u\cdot\nabla u+\nabla p-\Delta u+\nu|u|^{\beta-1}u=0, \\
               &   \nabla\cdot u=0,\\
                &  u(x,0)=u_0(x),
                          \end{aligned} \right.
                          \end{equation}
                and the 3D generalized Navier-Stokes equations with damping
\begin{equation} \label{w1-1}
\left\{ \begin{aligned}
       & \frac{\partial u}{\partial t}+u\cdot\nabla u+\nabla p+(-\Delta)^{\alpha}u+\nu|u|^{\beta-1}u=0, \\
               &   \nabla\cdot u=0,\\
                &  u(x,0)=u_0(x),
                          \end{aligned} \right.
                          \end{equation}
                          where $u=u(x,t)=(u_1(x,t),u_2(x,t),u_3(x,t))\in\mathbb{R}^3$ and $p=p(x,t)\in\mathbb{R}$ are unknown velocity field and pressure respectively.
The above two systems, which describe  porous media flow, friction effects or some dissipative mechanisms and so on, are of interest for various reasons.   The fractional power of the Laplace transform $(-\Delta)^{\alpha}\equiv\Lambda^{2\alpha}$ is defined through Fourier transform (see\cite{ 21})
$$
\widehat{(-\Delta)^{\alpha}f}(\xi)=\widehat{\Lambda^{2\alpha}f}(\xi)=|\xi|^{2\alpha}\widehat{f}(\xi),\quad\widehat{f}(\xi)=\int_{R^3}f(x)e^{-2\pi ix\cdot\xi}dx.
$$


 It was Cai and Jiu\cite{Jiu}
who first gave the physical background and studied the well-posedness of system (\ref{w1-0}) in 3D case. Later, 
Zhou\cite{ZY} improved the results in \cite{Jiu}. There are also  some papers concerned with the decay of solutions for system (\ref{w1-0}). By using Fourier splitting method(see~\cite{S1,S2}),  Cai and Lei\cite{CL} proved that if $u_0\in L^1(\mathbb{R}^3)\bigcap L^2(\mathbb{R}^3)$ and $\beta>\frac73$, the decay of weak solutions has a uniform rate
\begin{equation}\label{wadd1}
\|u\|_{L^2}^2\leq C(1+t)^{-\min\{\frac12,\frac{3\beta-7}{2(\beta+1)}\}}.
\end{equation}Jia, Zhang and Dong\cite{Jia} supposed that $\beta\geq\frac{10}3$  and $\|e^{\Delta t}u_0\|^2\leq C(1+t)^{-\mu}$~($\mu>0$), obtained the $L^2$ decay rate for system (\ref{w1-0}):
\begin{equation}\label{wadd2}
\|u\|_{L^2}^2\leq C(1+t)^{-\min\{\mu,\frac32\}}.
\end{equation}
In addition, Jiang\cite{JZH} used Fourier splitting method to extend the above two results, obtained the decay estimate
\begin{equation}\label{wadd3}
\|u\|_{L^2}^2\leq C(1+t)^{-\frac32},
\end{equation}for $u_0\in H^1(\mathbb{R}^3)$ and $\beta\geq3$. Latterly,      by using Zhou's method(see~\cite{Zhou1}), Jiang and Zhu\cite{JZ} established the  decay rate (\ref{wadd2}) for $u_0\in H^1(\mathbb{R}^3)$ and $\beta\geq3$.

It is worth pointing out that if $\nu=0$ in system (\ref{w1-1}), we obtain the generalized Navier-Stokes equations\cite{Wu,Wu2}. In \cite{Jiu}, Jiu and Yu studied the decay of solutions to the 3D generalized Navier-Stokes equations. Supposed that $0<\alpha<\frac54$ and $u_0\in L^2(\mathbb{R}^3)\bigcap L^p(\mathbb{R}^3)$ with $\max\{1,\frac1{3-2\alpha}\}\leq p<2$, the authors showed that the decay of the solution is
\begin{equation}
\label{wadd4}
\|u\|_{L^2}^2\leq C(1+t)^{-\frac3{2\alpha}\left(\frac2p-1\right)}.
\end{equation}
 Recently, Duan \cite{Duan} improved the above result to the case $0<\alpha<2$ and $u_0\in L^2(\mathbb{R}^3)\bigcap L^1(\mathbb{R}^3)$, proved that
\begin{equation}
\label{wadd5}
\|u\|_{L^2}^2\leq C(1+t)^{-\frac3{2\alpha} }.
\end{equation}

Analyze the above results, we find that  there's no result on the systems (\ref{w1-0}) and  (\ref{w1-1}) with $\beta<3$.  Can we establish the decay rate of solutions for systems (\ref{w1-0}) and  (\ref{w1-1}) when $\beta\in[1,3)$?  It seems as an interesting question.

In this paper, we consider the  time decay rate of solutions to systems (\ref{w1-0}) and (\ref{w1-1}). The motivation is to understand how the parameter $\beta$ affect the time decay rate of its solutions.
 Here, we contrast with the generalized heat equations,  study the decay rate of solutions for systems (\ref{w1-0}) and (\ref{w1-1}), establish the $L^2$ decay of solutions for  $u_0\in L^2(\mathbb{R}^3)\bigcap L^1(\mathbb{R}^3)$ and $\beta\geq1$. More precisely, we have the following two results

 \begin{theorem}
\label{wthm1.1}
 Suppose that $\beta\geq1$, $u_0\in L^2(\mathbb{R}^3)\bigcap L^1(\mathbb{R}^3)$ and $\nabla\cdot u_0=0$.  Then, for the solution $u(x,t)$ of system (\ref{w1-0}), there is a positive constant $C=C(\beta,\|u_0\|_{L^1},\|u_0\|_{L^2})$, such that
 $$
 \|u(x,t)\|_{L^2}^2\leq C(1+t)^{-\min\{\frac32,\frac{3\beta-2 }{2 }\}}~~\hbox{for~large}~t.
 $$
\end{theorem}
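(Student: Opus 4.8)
The plan is to use the Fourier splitting method of Schonbek. First I would derive the basic energy identity: taking the $L^2$ inner product of (\ref{w1-0}) with $u$ and using $\nabla\cdot u=0$ to annihilate the convective and pressure terms gives
\[
\frac{d}{dt}\|u\|_{L^2}^2+2\|\nabla u\|_{L^2}^2+2\nu\|u\|_{L^{\beta+1}}^{\beta+1}=0 .
\]
In particular $\|u(t)\|_{L^2}\le\|u_0\|_{L^2}$, and the damping term is a good (nonnegative) term that may be discarded for an upper bound. Next, splitting frequency space by the time-dependent ball $B(t)=\{\xi:|\xi|^2\le k/(1+t)\}$ and using Plancherel to bound $\|\nabla u\|_{L^2}^2\ge \frac{k}{1+t}\big(\|u\|_{L^2}^2-\int_{B(t)}|\hat u|^2\,d\xi\big)$, I obtain the differential inequality
\[
\frac{d}{dt}\big[(1+t)^{2k}\|u\|_{L^2}^2\big]\le C(1+t)^{2k-1}\int_{B(t)}|\hat u(\xi,t)|^2\,d\xi ,
\]
so that everything reduces to estimating the low-frequency mass $\int_{B(t)}|\hat u|^2\,d\xi$.

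To control this I would pass to the integral (Duhamel) form of the equation after applying the Leray projection $\mathbb P$: in Fourier variables
\[
\hat u(\xi,t)=e^{-|\xi|^2t}\hat u_0(\xi)-\int_0^t e^{-|\xi|^2(t-s)}\big[\widehat{\mathbb P\,\nabla\!\cdot(u\otimes u)}+\nu\,\widehat{\mathbb P(|u|^{\beta-1}u)}\big](\xi,s)\,ds ,
\]
using $u\cdot\nabla u=\nabla\!\cdot(u\otimes u)$. The linear part is handled by $|\hat u_0(\xi)|\le\|u_0\|_{L^1}$, which after integration over $B(t)$ yields the heat-flow rate $C\|u_0\|_{L^1}^2(1+t)^{-3/2}$; this is the source of the $\tfrac32$ in the exponent. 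For the convective term I would use $|\widehat{\nabla\!\cdot(u\otimes u)}|\le|\xi|\,\|u\|_{L^2}^2$, and for the damping term the bound $|\widehat{|u|^{\beta-1}u}|\le\||u|^{\beta-1}u\|_{L^1}=\|u\|_{L^\beta}^{\beta}$, interpolated as $\|u\|_{L^\beta}^\beta\le\|u\|_{L^1}^{2-\beta}\|u\|_{L^2}^{2(\beta-1)}$ for $1\le\beta\le2$ (with an analogous Gagliardo--Nirenberg bound for larger $\beta$). I then run a bootstrap: feeding an a priori rate $\|u\|_{L^2}^2\le C(1+t)^{-a}$ into these nonlinear estimates produces an improved rate for the low-frequency integral, hence, through the displayed differential inequality, improved decay of $\|u\|_{L^2}^2$; iterating drives the exponent up to the value stated in the theorem. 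The exponent $2(\beta-1)$ coming from the interpolation of the damping term is what forces the attainable rate to degrade to $\tfrac{3\beta-2}{2}$ when $\beta$ is small, while the heat part always caps the gain at $\tfrac32$, producing the minimum.

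The main obstacle will be the damping contribution to the low-frequency integral. Unlike the convective term it carries no factor of $|\xi|$, so its naive low-frequency bound is governed by the time integral $\int_0^t\|u(s)\|_{L^\beta}^{\beta}\,ds$, which need not converge when $\beta$ is close to $1$; controlling it forces me to track (or at least bound) the growth of $\|u(t)\|_{L^1}$ and to balance it carefully against the $(1+t)^{-3/2}$ gain coming from the volume of $B(t)$. Getting the bookkeeping in this balance to close at exactly $\tfrac{3\beta-2}{2}$, rather than at the weaker exponent that a single crude estimate suggests, is the delicate point, and I expect it to require either several passes of the bootstrap or a sharper interpolation than the one above.
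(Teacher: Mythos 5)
Your proposal follows the direct Fourier-splitting route (essentially the strategy of Cai--Lei), whereas the paper argues differently, and the difference matters: as written, your argument has a gap that cannot be closed within your framework. The crux is exactly the damping contribution you flag at the end. First, your interpolation $\|u\|_{L^\beta}^\beta\le\|u\|_{L^1}^{2-\beta}\|u\|_{L^2}^{2(\beta-1)}$ presupposes a bound on $\|u(t)\|_{L^1}$, but no such bound is available: for Navier--Stokes-type systems the nonlocal pressure term prevents propagation of the $L^1$ norm, and nothing in the energy structure of (\ref{w1-0}) supplies control of any $L^q$ norm with $q<2$ (this is precisely the obstruction, noted in the paper's remarks, that forced earlier works to assume $\beta\geq\frac73$ or larger). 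Second, even if one grants the $L^1$ bound, the bootstrap never starts: at the initial step you only know $\|u(t)\|_{L^2}^2\le C$, so
\begin{equation*}
\int_{B(t)}\Bigl|\int_0^t e^{-|\xi|^2(t-s)}\widehat{|u|^{\beta-1}u}\,ds\Bigr|^2 d\xi
\;\lesssim\;(1+t)^{-3/2}\Bigl(\int_0^t\|u(s)\|_{L^\beta}^\beta\,ds\Bigr)^2
\;\lesssim\;(1+t)^{-3/2}\,t^{2}\;=\;(1+t)^{1/2},
\end{equation*}
a growing quantity; fed into your differential inequality it yields no decay whatsoever, so the iteration is stuck at the trivial rate. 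The factor $e^{-|\xi|^2(t-s)}$ cannot help at low frequency, and the favorable sign of the damping term is invisible in the Duhamel/Fourier representation, so this is not a bookkeeping issue but a structural one.

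The paper avoids both problems by a different decomposition: it sets $v=$ the solution of the (generalized) heat equation with data $u_0$ and $w=u-v$, and tests the $w$-equation with $w$. The key point is that the damping term then appears as $-\int|u|^{\beta-1}u\cdot(u-v)\,dx\le-\frac12\|u\|_{L^{\beta+1}}^{\beta+1}+C\|v\|_{L^{\beta+1}}^{\beta+1}$: the good sign of $\|u\|_{L^{\beta+1}}^{\beta+1}$ absorbs everything involving the unknown solution, and what remains is an explicit heat-flow quantity, $\|v\|_{L^{\beta+1}}^{\beta+1}\lesssim t^{-3\beta/2}$ (for $\alpha=1$), which requires only $u_0\in L^1$ and no information on $\|u\|_{L^q}$ for $q<2$. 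Likewise the convective term is rewritten, using $\int u\cdot\nabla u\cdot u\,dx=0$, as a term bounded by $\|\nabla v\|_{L^\infty}\|u\|_{L^2}^2$. Fourier splitting is then applied to the $w$-equation, whose forcing terms all carry explicit decay, and one bootstrap pass gives the stated exponent $\min\{\frac32,\frac{3\beta-2}{2}\}$. If you want to salvage your approach, the idea to import is this transfer of the damping term onto the linear flow $v$ via its sign; without it, the direct method provably degenerates for $\beta<2$.
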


\begin{theorem}
\label{wlem2.2}
 Suppose that $0<\alpha<\frac54$, $\beta\geq1$,  $u_0\in L^2(\mathbb{R}^3)\bigcap L^1(\mathbb{R}^3)$ and $\nabla\cdot u_0=0$.  Then, for the solution $u(x,t)$ of system (\ref{w1-1}), there exists a positive constant $C=C(\alpha,\beta,\|u_0\|_{L^1},\|u_0\|_{L^2})$, such that
 $$
 \|u(x,t)\|_{L^2}^2\leq C(1+t)^{-\min\{\frac3{2\alpha},\frac{3\beta-2\alpha }{2 \alpha}\}}~~\hbox{for~large}~t.
 $$
\end{theorem}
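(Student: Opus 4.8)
The plan is to run the Fourier splitting method (see \cite{S1,S2}), adapted to the fractional dissipation, along exactly the lines of the scheme underlying Theorem \ref{wthm1.1} but with $-\Delta$ replaced by $(-\Delta)^{\alpha}=\Lambda^{2\alpha}$. First I would derive the basic energy identity: testing (\ref{w1-1}) against $u$ and using $\nabla\cdot u=0$ to annihilate both the convective term $\int(u\cdot\nabla u)\cdot u\,dx$ and the pressure term $\int\nabla p\cdot u\,dx$, one gets
\begin{equation*}
\frac12\frac{d}{dt}\|u\|_{L^2}^2+\|\Lambda^{\alpha}u\|_{L^2}^2+\nu\|u\|_{L^{\beta+1}}^{\beta+1}=0 .
\end{equation*}
In particular $\|u\|_{L^2}\le\|u_0\|_{L^2}$ and $\int_0^\infty(\|\Lambda^{\alpha}u\|_{L^2}^2+\nu\|u\|_{L^{\beta+1}}^{\beta+1})\,dt<\infty$.

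Next I would discard the nonnegative damping dissipation, pass to Fourier variables, and split the dissipation integral across the time-dependent ball $S(t)=\{\xi:|\xi|^{2\alpha}\le k/[2(1+t)]\}$. On the complement one has $|\xi|^{2\alpha}\ge k/[2(1+t)]$, so Plancherel gives $\|\Lambda^{\alpha}u\|_{L^2}^2\ge\frac{k}{2(1+t)}\big(\|u\|_{L^2}^2-\int_{S(t)}|\hat u|^2\,d\xi\big)$. Choosing $k$ large and multiplying by $(1+t)^k$ yields
\begin{equation*}
\frac{d}{dt}\big[(1+t)^k\|u\|_{L^2}^2\big]\le k(1+t)^{k-1}\int_{S(t)}|\hat u(\xi,t)|^2\,d\xi ,
\end{equation*}
so that, after integrating in time, the whole problem reduces to estimating the low-frequency mass $\int_{S(t)}|\hat u|^2\,d\xi$, where the volume factor $|S(t)|\sim(1+t)^{-3/(2\alpha)}$ supplies the base rate $3/(2\alpha)$.

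To control $\hat u$ on $S(t)$ I would use the Duhamel representation for the Leray-projected system,
\begin{equation*}
\hat u(\xi,t)=e^{-|\xi|^{2\alpha}t}\hat u_0(\xi)-\int_0^t e^{-|\xi|^{2\alpha}(t-s)}\,\widehat{\mathbb{P}(\nabla\cdot(u\otimes u)+\nu|u|^{\beta-1}u)}(\xi,s)\,ds ,
\end{equation*}
and bound each nonlinear factor by its $L^1_x$ norm, the multiplier $\mathbb{P}$ and the heat factor being harmless on $S(t)$. This produces three contributions to $\sup_{\xi\in S(t)}|\hat u|$: the datum $|\hat u_0(\xi)|\le\|u_0\|_{L^1}$ (which reproduces the linear rate $3/(2\alpha)$), the convective term $\le C|\xi|\int_0^t\|u\|_{L^2}^2\,ds$ (the gain $|\xi|\lesssim(1+t)^{-1/(2\alpha)}$ coming from writing $u\cdot\nabla u=\nabla\cdot(u\otimes u)$), and the damping term $\le\nu\int_0^t\|u\|_{L^{\beta}}^{\beta}\,ds$. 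Feeding a trial decay $\|u\|_{L^2}^2\le C(1+t)^{-\gamma}$ into these time integrals, I would then run a bootstrap that successively improves $\gamma$ up to the asserted exponent $\min\{3/(2\alpha),(3\beta-2\alpha)/(2\alpha)\}$. The hypothesis $\alpha<\tfrac54$ enters precisely here: a direct computation shows the convective contribution decays at a rate $\ge 2\gamma+\tfrac{5}{2\alpha}-2$, which exceeds $\gamma$ for every $\gamma\ge0$ as soon as $\tfrac{5}{2\alpha}>2$, so the convective term never dictates the final rate.

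The main obstacle is the damping term, which is exactly what produces the branch $(3\beta-2\alpha)/(2\alpha)$. In the regime where this branch is the smaller one the relevant exponent satisfies $\beta<2$, and then $\|u\|_{L^{\beta}}^{\beta}$ sits strictly below the $L^2$ scale, so no Sobolev or Gagliardo--Nirenberg inequality controls it by $\|u\|_{L^2}$ and $\|\Lambda^{\alpha}u\|_{L^2}$ alone; one is forced to interpolate $\|u\|_{L^{\beta}}^{\beta}\le\|u\|_{L^1}^{2-\beta}\|u\|_{L^2}^{2(\beta-1)}$, which is the reason the constant must depend on $\|u_0\|_{L^1}$. Exploiting the $L^1$ information, tracking the resulting coupled time-weighted integrals, and verifying that the induction closes at $(3\beta-2\alpha)/(2\alpha)$ rather than stalling at a weaker exponent is where essentially all of the technical work concentrates; the remaining estimates are the routine fractional analogues of the $\alpha=1$ computations behind Theorem \ref{wthm1.1}.
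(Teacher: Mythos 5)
Your proposal follows the classical Schonbek route (energy identity for $u$, Fourier splitting over $S(t)$, Duhamel bound for $\hat u$ at low frequencies, bootstrap), which is genuinely different from the paper's argument; but it has a real gap, located exactly at the step you yourself defer as ``where essentially all of the technical work concentrates.'' Your control of the damping contribution rests on the interpolation $\|u\|_{L^\beta}^\beta\le\|u\|_{L^1}^{2-\beta}\|u\|_{L^2}^{2(\beta-1)}$, which requires a bound on $\|u(t)\|_{L^1}$ for positive times. The hypotheses give only $u_0\in L^1\cap L^2$, and persistence of the $L^1$ norm is not available for this system: the pressure/Leray projection $\mathbb{P}$ is not bounded on $L^1$, so $\|u(t)\|_{L^1}$ cannot simply be replaced by $\|u_0\|_{L^1}$, and proving any uniform $L^1$ (or weighted) bound would be a substantial separate argument. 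Moreover, even if one grants such a bound, your iteration does not close at the stated exponent: the damping term contributes $\int_0^t\|u\|_{L^\beta}^\beta\,ds\lesssim(1+t)^{1-\gamma(\beta-1)}$ to $\sup_{\xi\in S(t)}|\hat u|$, hence a decay exponent $\frac{3}{2\alpha}+2\gamma(\beta-1)-2$ per pass, whose fixed point is $\frac{3-4\alpha}{2\alpha(3-2\beta)}$; for $\alpha>\frac34$ and $\beta$ near $1$ this is negative, i.e., the scheme produces no decay at all, let alone the claimed $\frac{3\beta-2\alpha}{2\alpha}$. This is precisely the obstruction flagged in the paper's Remark 1.4: for $\beta<2$ there is no admissible norm of $u$ with which to estimate $\|u\|_{L^\beta}^\beta$.

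The paper's proof avoids this entirely by a comparison decomposition rather than direct splitting on $u$: it sets $w=u-v$, where $v$ solves the linear fractional heat equation with the same data, and performs the energy estimate on $w$. In that estimate the damping term enters with a favorable sign, $-\int|u|^{\beta-1}u\cdot w\,dx=-\|u\|_{L^{\beta+1}}^{\beta+1}+\int|u|^{\beta-1}u\cdot v\,dx\le-\frac12\|u\|_{L^{\beta+1}}^{\beta+1}+C\|v\|_{L^{\beta+1}}^{\beta+1}$ by Young's inequality, and the convective term reduces, after integration by parts, to $\|\nabla v\|_{L^\infty}\|u\|_{L^2}^2$. Thus all information below the $L^2$ scale is demanded only of the linear flow $v$, whose decay from $L^1\cap L^2$ data is known (Lemma 2.1 of the paper): $\|v\|_{L^{\beta+1}}^{\beta+1}\lesssim t^{-3\beta/(2\alpha)}$ and $\|\nabla v\|_{L^\infty}\lesssim t^{-2/\alpha}$. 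Fourier splitting applied to the $w$-inequality and one bootstrap then yield exactly the branch $\frac{3\beta-2\alpha}{2\alpha}$ (the linear rate $\frac{3\beta}{2\alpha}$ minus one from time integration), for every $\beta\ge1$. This sign-plus-comparison trick is the idea your proposal is missing, and without it (or some substitute for the $L^1$ persistence you assume) the approach you outline cannot reach the theorem.
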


\begin{remark}
Fourier splitting method is introduced by Schonbek in 1980s (see~\cite{S1,S2}), then it becomes a standard way (also a powerful tool) to establish decay rate of solutions. In 2007, Zhou introduced a new method (see~\cite{Zhou1}, some people called Zhou's method) to handle decay rate problems. One can refer to~\cite{Zhou1,Zhou2,JZH,JZ} for details and developments.
\end{remark}
\begin{remark}
It is important to note that
 because of the existence of the   damping term $|u|^{\beta-1}u$, if we use Fourier splitting method or Zhou's method to study the decay estimate, we have a problem: How to control $\|u\|_{\beta}^{\beta}$? Of course, the interpolation inequality is one of the effective methods, we can use $\|u\|^2_{L^2}$ and $\|u\|_{L^{\beta+1}}^{\beta+1}$ to estimate this term. But, in order to let the estimate valid, we have to suppose that $\beta\geq 2$. 
 \end{remark}

 \begin{remark}
 As we know, Theorems \ref{wthm1.1} and \ref{wlem2.2} are the first decay estimates for  systems (\ref{w1-0}) and (\ref{w1-1}) with $\beta\in[1,\infty)$.
 \end{remark}

 \section{Proof of Theorem \ref{wlem2.2} }

Since Theorem \ref{wthm1.1} is one special case of Theorem \ref{wlem2.2}. 
We only prove Theorem \ref{wlem2.2} in this paper.

For the Cauchy problem of generalized heat equation
\begin{equation}
\begin{aligned}\label{w2-1}
v_t+(-\Delta)^{\alpha}v=0,~~~
v(x,0)=u_0(x),\end{aligned}
\end{equation}
we have the following space-time estimates:
\begin{lemma}[see \cite{Miao}]\label{wlem2.1}
Let $1\leq r\leq q\leq\infty$ and $u_0(x)\in L^r(\mathbb{R}^3)$. Then, for $\alpha>0$ and $\mu>0$, problem (\ref{w2-1}) satisfies the following estimates:
\begin{equation}
\label{w2-2}
\|v(x,t)\|_{L^q}\leq Ct^{-\frac 3{2\alpha}(\frac1r-\frac1q)}\|u_0\|_{L^r},
\end{equation}
and
\begin{equation}
\label{w2-3}
\|(-\Delta)^{-\frac{\mu}2}v(x,t)\|_{L^q}\leq Ct^{-\frac{\mu}{2\alpha}-\frac3{2\alpha}(\frac1r-\frac1q)}\|u_0\|_{L^r}.
\end{equation}

\end{lemma}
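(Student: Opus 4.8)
The plan is to reduce everything to the explicit convolution kernel of the linear semigroup and to exploit its parabolic self-similarity, so that both estimates become a scaling computation combined with Young's inequality. First I would write the solution of (\ref{w2-1}) as $v(x,t)=e^{-t(-\Delta)^{\alpha}}u_0=K_t*u_0$, where the kernel is defined on the Fourier side by $\widehat{K_t}(\xi)=e^{-t|\xi|^{2\alpha}}$. From $e^{-t|\xi|^{2\alpha}}=\widehat{K_1}(t^{1/2\alpha}\xi)$ and the scaling law of the Fourier transform in $\mathbb{R}^3$ one gets the self-similar form
\begin{equation*}
K_t(x)=t^{-\frac{3}{2\alpha}}K_1\!\left(t^{-\frac{1}{2\alpha}}x\right),
\end{equation*}
and hence, for every $1\le s\le\infty$, the scaling identity $\|K_t\|_{L^s}=t^{-\frac{3}{2\alpha}\left(1-\frac1s\right)}\|K_1\|_{L^s}$. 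The one genuinely analytic input is that the unit-time profile $K_1$ lies in $L^s(\mathbb{R}^3)$ for all $s\in[1,\infty]$: it is bounded, being the inverse transform of the integrable symbol $e^{-|\xi|^{2\alpha}}$, and it decays fast enough at infinity (the behaviour being governed by the symbol near $\xi=0$) to lie in every $L^s$, so $\|K_1\|_{L^s}<\infty$ throughout.

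With this in hand, (\ref{w2-2}) is immediate. Given $1\le r\le q\le\infty$ I would fix the Young exponent $s$ by $1+\frac1q=\frac1s+\frac1r$, equivalently $1-\frac1s=\frac1r-\frac1q$, which indeed satisfies $s\in[1,\infty]$ because $r\le q$. Young's convolution inequality then gives
\begin{equation*}
\|v(t)\|_{L^q}=\|K_t*u_0\|_{L^q}\le\|K_t\|_{L^s}\|u_0\|_{L^r}=\|K_1\|_{L^s}\,t^{-\frac{3}{2\alpha}\left(\frac1r-\frac1q\right)}\|u_0\|_{L^r},
\end{equation*}
which is exactly (\ref{w2-2}) with $C=\|K_1\|_{L^s}$ depending only on $r,q,\alpha$.

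For (\ref{w2-3}) I would run the same argument with the convolution kernel $G_t$ associated with the operator on its left-hand side, namely the Fourier multiplier $|\xi|^{\mu}e^{-t|\xi|^{2\alpha}}$. Because this multiplier is homogeneous of degree $\mu$, the parabolic scaling now carries an additional power of $t$, so that $G_t(x)=t^{-\frac{3+\mu}{2\alpha}}G_1\!\left(t^{-1/2\alpha}x\right)$ and hence $\|G_t\|_{L^s}=t^{-\frac{\mu}{2\alpha}-\frac{3}{2\alpha}\left(1-\frac1s\right)}\|G_1\|_{L^s}$. Since the extra multiplier vanishes at the origin and decays rapidly, $G_1\in L^s$ for all $s$, and repeating the Young step with the same choice of $s$ produces the claimed bound
\begin{equation*}
\|G_t*u_0\|_{L^q}\le C\,t^{-\frac{\mu}{2\alpha}-\frac{3}{2\alpha}\left(\frac1r-\frac1q\right)}\|u_0\|_{L^r}.
\end{equation*}
The part I expect to require the most care is not the bookkeeping of scaling exponents but the verification that $K_1$ and $G_1$ genuinely belong to the $L^s$ spaces demanded by the full range $1\le r\le q\le\infty$; this is where the non-smoothness of the symbol $|\xi|^{2\alpha}$ at the origin enters, and it is controlled by the standard pointwise decay estimates for the (fractional) heat kernel, which is presumably the content imported from \cite{Miao}.
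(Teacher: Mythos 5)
The paper does not prove this lemma at all: it is quoted directly from \cite{Miao}, so there is no internal proof to compare against. Your argument --- parabolic self-similarity of the convolution kernel plus Young's inequality with $1+\frac1q=\frac1s+\frac1r$ --- is the standard proof and is essentially how the estimate is established in the cited literature; the scaling bookkeeping $\|K_t\|_{L^s}=t^{-\frac{3}{2\alpha}\left(1-\frac1s\right)}\|K_1\|_{L^s}$ and the resulting exponents are correct, including the endpoint cases of Young's inequality.

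Two caveats. First, what you proved is not literally the printed statement: the left-hand side of (\ref{w2-3}) carries $(-\Delta)^{-\mu/2}$, whose Fourier multiplier is $|\xi|^{-\mu}$, not the $|\xi|^{\mu}$ you assigned to it. By your own scaling computation, the Riesz-potential version would produce a factor $t^{+\mu/(2\alpha)}$ rather than $t^{-\mu/(2\alpha)}$, so (\ref{w2-3}) as printed cannot hold and the sign must be a typo; the version you proved, with $(-\Delta)^{+\mu/2}$, is the intended one and is exactly what the paper uses in (\ref{w2-13}) (there $\mu=1$, $r=1$, $q=\infty$ gives $\|\nabla v\|_{L^{\infty}}\leq Ct^{-2/\alpha}$). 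You should flag this discrepancy explicitly rather than silently re-identify the operator. Second, the genuinely analytic input --- that $K_1$ and $G_1$ lie in $L^s(\mathbb{R}^3)$ for all $1\leq s\leq\infty$ --- is correctly isolated by you, but your justification for $G_1$ is loose: integrability of the kernel does not follow from the multiplier ``vanishing at the origin''; the relevant mechanism is that $e^{-|\xi|^{2\alpha}}$ and $|\xi|^{\mu}e^{-|\xi|^{2\alpha}}$ are smooth away from $\xi=0$ and rapidly decaying, with only an algebraic non-smoothness at the origin, which yields pointwise bounds of the type $|K_1(x)|\leq C(1+|x|)^{-3-2\alpha}$ (and an analogue for $G_1$), hence $K_1,G_1\in L^1\cap L^{\infty}$. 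Since you explicitly defer this kernel estimate to \cite{Miao}, your proof is complete modulo exactly the input the paper itself takes on faith.
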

\begin{remark}
By (\ref{w2-2}),  if  $u_0(x)\in L^1(\mathbb{R}^3)$,
\begin{equation}\label{w2-2a}\|v\|_{L^2}\leq Ct^{-\frac34}\|u_0\|_{L^1}.
\end{equation}
\end{remark}


Take $w(x,t)=u(x,t)-v(x,t)$. Then $w$ solves the equations 
\begin{equation} \label{w2-4}
\left\{ \begin{aligned}
         &\partial_tw+\Lambda^{2\alpha}w+u\cdot\nabla u+\nabla p+|u|^{\beta-1}u=0,\\
                  &\nabla\cdot w=0,\\
                  &w(x,0)=0.
                          \end{aligned} \right.
                          \end{equation}

Now, we give the proof of Theorem \ref{wlem2.2}.
\begin{proof}[Proof of Theorem \ref{wlem2.2}]
Multiplying both sides of Equation (\ref{w2-4}) by $w$, integrating over $\mathbb{R}^3$, we deduce that
\begin{equation}
\label{w2-8}\begin{aligned}&
\frac12\frac d{dt}\|w\|^2_{L^2}+\|\Lambda^{\alpha}w\|_{L^2}^2\\=&-\sum_{i,j=1}^3\int_{\mathbb{R}^3}u_i\frac{\partial u_j}{\partial x_i}w_jdx-\int_{\mathbb{R}^3}w\cdot\nabla pdx-\int_{\mathbb{R}^3}|u|^{\beta-1}u\cdot wdx.
\end{aligned}\end{equation}
We have
\begin{equation}
\begin{aligned}\label{w2-9}
\left|-\sum_{i,j=1}^3\int_{\mathbb{R}^3}u_i\frac{\partial u_j}{\partial x_i}w_jdx\right|
=&\left|\sum_{i,j=1}^3\int_{\mathbb{R}^3}u_i\frac{\partial u_j}{\partial x_i}(u_j-v_j)dx\right|=\left|\sum_{i,j=1}^3\int_{\mathbb{R}^3}u_i\frac{\partial u_j}{\partial x_i}v_jdx\right|
\\
=&\left|\sum_{i,j=1}^3\int_{\mathbb{R}^3}u_i\frac{\partial v_j}{\partial x_i}u_jdx\right|\leq C\|\nabla v\|_{L^{\infty}}\|u\|_{L^2}^2.
\end{aligned}\end{equation}
and
\begin{equation}
\label{w2-9-1}\begin{aligned}
-\int_{\mathbb{R}^3}|u|^{\beta-1}u\cdot wdx=&-\int_{\mathbb{R}^3}|u|^{\beta-1}u\cdot (u-v)dx\\=&-\|u\|_{L^{\beta+1}}^{\beta+1}+\int_{\mathbb{R}^3}|u|^{\beta-1}u\cdot vdx
\\\leq& -\frac12\|u\|_{L^{\beta+1}}^{\beta+1}+C\|v\|_{L^{\beta+1}}^{\beta+1}.\end{aligned}
\end{equation}
Note that $\nabla\cdot u=\nabla
\cdot w=0$. We have
\begin{equation}
\label{w2-10}
\int_{\mathbb{R}^3}w\cdot\nabla pdx=0.\end{equation}
Combining (\ref{w2-8})-(\ref{w2-10}) together gives
\begin{equation}
\label{w2-11}
\frac d{dt}\|w\|_{L^2}^2+2\|\Lambda w\|_{L^2}^2+\|u\|_{L^{\beta+1}}^{\beta+1}\leq \|\nabla v\|_{L^{\infty}}\|u\|_{L^2}^2+C\|v\|_{L^{\beta+1}}^{\beta+1}.
\end{equation}
By (\ref{w2-2}) and (\ref{w2-3}), we have the following estimates
\begin{equation}
\label{w2-13}
\|\nabla v\|_{L^{\infty}}\leq Ct^{-\frac2{\alpha}},~~~\|v\|_{L^{\beta+1}}\leq Ct^{-\frac3{2\alpha }\left(1-\frac1{\beta+1}\right)}.
\end{equation}
From (\ref{w2-11})-(\ref{w2-13}), we obtain
\begin{equation}\label{w2-14-1}\begin{aligned}
\frac d{dt}\|w\|_{L^2}^2+2\|\Lambda w\|_{L^2}^2+\|u\|_{L^{\beta+1}}^{\beta+1}
\leq&  C\left(\frac{t+t}2\right)^{-\frac2{\alpha}}\|u\|_{L^2}^2+C\left(\frac{t+t}2\right)^{-\frac{3\beta}{2\alpha }}
\\
\leq&C\left(\frac{1+t}2\right)^{-\frac2{\alpha}}\|u\|_{L^2}^2+C\left(\frac{1+t}2\right)^{-\frac{3\beta}{2\alpha }}
\\
\leq&
C(1+t)^{-\frac2{\alpha}}\|u\|_{L^2}^2+C(1+t)^{-\frac{3\beta}{2\alpha }},~~\forall t>1.\end{aligned}
\end{equation}
Note that $\alpha\in(0,\frac54)$  and $\|u\|_{L^2}\leq C$. Just as \cite{JZH}, using Fourier splitting method, we have
\begin{equation}
\label{w2-15}\begin{aligned}
\|w\|_{L^2}^2\leq&  C(1+t)^{-\min\{\frac{2-\alpha }{\alpha },\frac{3\beta-2\alpha }{2\alpha }\}},\quad\forall t>1.
\end{aligned}\end{equation}
Then, (\ref{w2-2a}) and (\ref{w2-15}) gives
\begin{equation}
\label{wx-1}\begin{aligned}
\|u\|_{L^2}^2=&\|v+w\|_{L^2}^2\leq C\|v\|_{L^2}^2+C\|w\|_{L^2}^2
\\\leq& C(1+t)^{-\frac3{2\alpha}}+C(1+t)^{-\min\{\frac{2-\alpha }{\alpha },\frac{3\beta-3-2\alpha }{2\alpha }\}}
\\
\leq&C(1+t)^{-\min\{\frac3{2\alpha},\frac{2-\alpha }{\alpha },\frac{3\beta-2\alpha }{2\alpha }\}},\quad\forall t>1.
\end{aligned}\end{equation}
We now use this first preliminary decay to bootstrap, trying to find sharper estimates.
Combining (\ref{w2-14-1}) and (\ref{wx-1}) together gives
\begin{equation}
\label{w912-1}
\begin{aligned}&
\frac d{dt}\|w\|_{L^2}^2+2\|\Lambda w\|_{L^2}^2+\|u\|_{L^{\beta+1}}^{\beta+1}\\
\leq& C(1+t)^{-\frac2{\alpha}}(1+t)^{-\min\{\frac3{2\alpha},\frac{2-\alpha }{\alpha },\frac{3\beta-2\alpha }{2\alpha }\}}+C(1+t)^{-\frac{3\beta}{2\alpha }},~~\forall t>1.\end{aligned}
\end{equation}
Hence
\begin{equation}\label{wx-2-1}\begin{aligned}
\|w\|_{L^2}^2\leq& C (1+t)^{-\frac{2-\alpha}{\alpha}}(1+t)^{-\min\{\frac3{2\alpha},\frac{2-\alpha }{\alpha },\frac{3\beta-2\alpha }{2\alpha }\}}+C(1+t)^{-\frac{3\beta-2\alpha}{2\alpha }}
\\
\leq&C(1+t)^{-\min\{\frac{3}{2\alpha},\frac{8-4\alpha}{\alpha},\frac{3\beta-2\alpha}{2\alpha}\}}
\leq C(1+t)^{-\min\{\frac{3}{2\alpha},\frac{3\beta-2\alpha}{2\alpha}\}},~~\forall t>1.\end{aligned}
\end{equation}
So(\ref{w2-2a}) 
and (\ref{wx-2-1}) gives
\begin{equation}
\label{wx-3}\begin{aligned}
\|u\|_{L^2}^2=&\|v+w\|_{L^2}^2\leq C\|v\|_{L^2}^2+C\|w\|_{L^2}^2
\\\leq& C(1+t)^{-\frac3{2\alpha}}+ C(1+t)^{-\min\{\frac3{2\alpha},\frac{3\beta-2\alpha}{2\alpha}\}}
\\
\leq&C(1+t)^{-\min\{\frac{3}{2\alpha},\frac{3\beta-2\alpha}{2\alpha}\}},\quad\forall t>1.
\end{aligned}\end{equation}
If we bootstrap  once again, the decay rate is also the same to (\ref{wx-3}), there is no improvement. Then, the estimate (\ref{wx-3}) is optimal, this concludes the proof of Theorem \ref{wlem2.2}.
\end{proof}

\section*{Acknowledgements}
This work is partially supported by  NSFC (Grant No. 11401258),  NSF of Jiangsu Province of China (Grant No. BK20140130)  and China Postdoctoral Science Foundation (Grant No. 2015M581689).

\end{document}